\documentclass{article}
\usepackage{ amsmath, amsfonts, amssymb,amsthm}
\usepackage{color, graphicx}

\newtheorem{definition}{Definition}[section]
\newtheorem{lemma}[definition]{Lemma}
\newtheorem{remark}[definition]{Remark}

\newtheorem{proposition}[definition]{Proposition}

\newtheorem{theorem}[definition]{Theorem}

\def\reg{\operatorname{reg}}
\setlength{\oddsidemargin}{0.25in}

\setlength{\textwidth}{6in}

\setlength{\topmargin}{-0.25in}

\setlength{\headheight}{0.3in}

\setlength{\headsep}{0.2in}

\setlength{\textheight}{9in}

\setlength{\footskip}{0.1in}

\begin{document}

\makeatletter      
\renewcommand{\ps@plain}{%
     \renewcommand{\@oddhead}{\textrm{ON THE REGULARITY INDEX OF $s$ EQUIMULTIPLE FAT POINTS}\hfil\textrm{\thepage}}%
     \renewcommand{\@evenhead}{\@oddhead}%
     \renewcommand{\@oddfoot}{}
     \renewcommand{\@evenfoot}{\@oddfoot}}
\makeatother     

\title{ON THE REGULARITY INDEX OF $s$ EQUIMULTIPLE FAT POINTS NOT ON A LINEAR $(r-1)$-SPACE, $s \le r+3$}         
\author{Phan Van Thien,  Ho Thi Doan Trang}        
\date{}          
\maketitle

\pagestyle{plain}

\begin{abstract}\noindent We prove the Trung's conjecture about Segre's upper
bound for $s$ equimultiple fat points not on a linear $(r-1)$-space, $s\le r+3$,
by algebraic method used in \cite{CTV}. This method also may used to research
other cases of fat points.
\end{abstract}

\noindent {\it Key words and phrases.} Regularity index; Fat points.

\par \noindent {2010 Mathematics Subject Classification.} Primary 14C20;
Secondary 13D40.

\par \section{Introduction} \ \ \ \ Let $K$ be an an
algebraically closed of arbitrary characteristic and let $P_1, \ldots, P_s$ be
distinct points in the  projective space $\mathbb P^n:=\mathbb P^n_K$. Denote by
$\wp_1, \ldots, \wp_s$ the homogeneous prime ideals of the polynomial ring
$R:=K[X_0, \ldots, X_n]$ corresponding to the points $P_1, \ldots, P_s$.\par

Let $m_1, \ldots, m_s$ be positive integers. Denote by $m_1P_1+\cdots+m_sP_s$
the zero-scheme defined by the ideal $I:=\wp^{m_1}_1 \cap \cdots \cap
\wp^{m_s}_s$. Then $$Z:=m_1P_1+\cdots+m_sP_s$$ is called to be a set of fat
points in $\mathbb P^n$. \par

The ring $R/I$ is the homogeneous coordinate ring of $Z$.  It is a graded ring,
$R/I=\underset{t \ge 0}{\oplus} (R/I)_t$, whose multiplicity is $e(R/I)
:=\underset{i=1}{\overset{s}{\sum}} \binom{m_i+n-1}{n}$. We also call it to be
the multiplicity of $Z$, and denote it by $\reg(Z)$.

The Hilbert function of $Z$ is defined to be $H_Z(t):=\dim_K (R/I)_t$. This
function strictly increases until it reaches $e(Z)$, at which it stabilizes.
Then the number
$$\min\{ t\in \mathbb Z | H_A(t)=e(A)\}$$
is called the regularity index of $Z$, and denote it by $\reg(Z)$. It is well
known that $\reg(Z)=\reg(R/I)$, the Castelnuovo-Mumford regularity of $R/I$.\par

The problem to exactly determine $\reg(Z)$ is more fairly difficult. So, instead
of it one to find an upper bound for $\reg(Z)$. \par

For generic fat points $Z=m_1P_1+\cdots+m_sP_s$ in $\mathbb P^2$ with $m_1 \ge
\cdots \ge m_s$, Segre \cite{S} showed that
$$\reg(Z) \le \max\left\{m_1+m_2-1, \left[\frac{m_1+\cdots+m_s}2\right]\right\}.$$

A set of fat points $Z=m_1P_1+\cdots+m_sP_s$ in $\mathbb P^n$ is said to be in
general position if no $j+2$ of the points $P_1, \ldots, P_s$ are on any
$j$-plane for $j <n$. A set of generic fat points always in general position.
Segre's upper bound later was  generalised by Catalisano, Trung, Valla
\cite{CTV} for fat points in general position in $\mathbb P^n$

$$\reg(Z) \le
\max\left\{m_1+m_2-1, \left[(\sum_{i=1}^s m_i+n-2)/n\right]\right\}.$$

In 1996, Trung conjectured an upper bound for the regularity index of arbitrary
fat points $Z=m_1P_1+\cdots+m_sP_s$ in $\mathbb P^n$ (see \cite{Th2},
\cite{Th3}).

\medskip

\noindent {\bf Conjecture.} {\it
$$\reg(Z) \le \max \{ T_j |\  j=1,\ldots,n \},$$
where
$$T_j = \max\left\{\left[\frac{\sum_{l=1}^q m_{i_l}+ j- 2}{j}\right] |\
P_{i_1}, \ldots , P_{i_q} \text{ lie on a linear }j\text{-space}\right\}.$$}

\medskip

He called this upper bound to be the Segre's upper bound. This upper bound
nowadays referred to as Segre's bound.\par

The same conjecture was also given independently by Fatabbi and Lorenzini (see
\cite{Fa1}, \cite{Fa2}).\par

The Trung's conjecture has been proved in many cases: $n=2$ (see \cite{Fa1},
\cite{Th1}), $n=3$ (see \cite{Fa2}, \cite{Th2}), $n=4$ and $m_1=\cdots=m_s=2$
(see \cite{Th3}), for $n+2$ non-degenerate fat points in $\mathbb P^n$ (see
\cite{Be}), for $s+2$ fat points whose support not on a linear $(s-1)$-space
(see \cite{Th6}), for $n+3$ non-degenerate almost equimultiple fat points in
$\mathbb P^n$ (see \cite{TH}), and recently for $n+3$ non-degenerate fat points
in $\mathbb P^n$ (see \cite{TH}).

\medskip

\par A set of $s$ points $P_1,
\ldots, P_s$ in $\mathbb P^n$ is said to be in general position on a linear
$r$-space $\alpha$  if all points $P_1, \ldots, P_s$ lie on the $\alpha$ and  no
$j+2$ of these points lie on a linear $j$-space for $j <r$. So if $r=n$, then we
get the case of the points which are in general position in $\mathbb P^n$.

\par A set of $s$ fat points $Z=m_1P_1+\cdots +m_sP_s$ in $\mathbb P^n$ is said
to be equimultiple if  $m_1=\cdots=m_s$.
\par

In this paper  by algebraic method used in \cite{CTV}, we prove the Trung's
conjecture about Segre's upper bound for $s$ equimultiple fat points not on a
linear $(r-1)$-space, $s\le r+3$. This method also may used to research other
cases of fat points.

\section{Preliminaries} \ \ \ \ From now on, we say a $j$-plane, i.e. a linear
$j$-space. We identify a hyperplane as the linear form defining it.\par

We use the following lemmas which have been proved in \cite{CTV}, \cite{Th6}.
The first lemma allows us to compute the regularity index by induction.

\begin{lemma}\label{lem21} \cite[Lemma 1]{CTV} Let $P_1,\ldots , P_r, P$ be
distinct points in $\Bbb P^n$ and let $\wp$ be the defining ideal of $P$. If
$m_1,\ldots , m_r$ and $a$ are positive integers, $J := \wp^{m_1}_1\cap
\cdots\cap  \wp^{m_r}_r$, and $I = J \cap \wp^a$, then $$\reg(R/I) =
\max\left\{a-1, \reg(R/J), \reg(R/(J+\wp^a)) \right\}.$$ \end{lemma}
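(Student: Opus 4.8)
The plan is to derive everything from the Mayer--Vietoris short exact sequence attached to the decomposition $I = J \cap \wp^a$. Since $I=J\cap\wp^a$, the diagonal map $r \mapsto (r,r)$ and the difference map $(u,v)\mapsto u-v$ assemble into a short exact sequence of graded $R$-modules
\begin{equation*}
0 \to R/I \to R/J \oplus R/\wp^a \to R/(J+\wp^a) \to 0 .
\end{equation*}
Reading this in each degree $t$ and using additivity of dimension over exact sequences of $K$-vector spaces gives the numerical identity
\begin{equation*}
H_{R/I}(t) = H_{R/J}(t) + H_{R/\wp^a}(t) - H_{R/(J+\wp^a)}(t).
\end{equation*}
This identity is what I would push on.

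Next I would pin down the three terms on the right. First, $R/\wp^a$ is the coordinate ring of the single fat point $aP$; choosing coordinates so that $\wp=(X_1,\dots,X_n)$, the degree-$t$ piece of $R/\wp^a$ is spanned by the monomials whose degree in $X_1,\dots,X_n$ is at most $a-1$, from which one reads off that $H_{R/\wp^a}(t)$ strictly increases to $e(R/\wp^a)=\binom{a+n-1}{n}$ and stabilizes exactly at $t=a-1$; hence $\reg(R/\wp^a)=a-1$. Second, since $P$ is distinct from $P_1,\dots,P_r$ the schemes defined by $J$ and $\wp^a$ have disjoint support, so $R/(J+\wp^a)$ is of finite length and $e(R/I)=e(R/J)+e(R/\wp^a)$. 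Being a standard graded algebra, its Hilbert function, once it vanishes, stays zero; so with the regularity index read as $\min\{t: H(t)=e\}$ (here $e=0$) we have $H_{R/(J+\wp^a)}(t)=0$ precisely for $t\ge \reg(R/(J+\wp^a))$.

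The conclusion then falls out by rewriting the numerical identity as a sum of deficiencies. Subtracting from $e(R/I)=e(R/J)+e(R/\wp^a)$ gives
\begin{equation*}
e(R/I)-H_{R/I}(t) = \bigl(e(R/J)-H_{R/J}(t)\bigr) + \bigl(e(R/\wp^a)-H_{R/\wp^a}(t)\bigr) + H_{R/(J+\wp^a)}(t).
\end{equation*}
The crucial point is that all three summands on the right are nonnegative: fat point schemes are arithmetically Cohen--Macaulay, so $R/J$ and $R/\wp^a$ have nondecreasing Hilbert functions bounded above by their multiplicities (a general linear form is a nonzerodivisor, making each first difference nonnegative), while the last term is a dimension. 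Therefore the left side vanishes if and only if each summand does, i.e. if and only if $t\ge \reg(R/J)$, $t\ge a-1$, and $t\ge \reg(R/(J+\wp^a))$ all hold. Taking the least such $t$ yields $\reg(R/I)=\max\{a-1,\reg(R/J),\reg(R/(J+\wp^a))\}$.

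The step I expect to be the main obstacle is the bookkeeping around the finite-length quotient $R/(J+\wp^a)$: one must be careful that its regularity index is the first degree at which its Hilbert function becomes (and remains) zero---one more than its top nonzero degree---so that the vanishing condition $H_{R/(J+\wp^a)}(t)=0$ lines up exactly with $t\ge\reg(R/(J+\wp^a))$ and no spurious shift by one slips into the final maximum. Establishing the nonnegativity of the two ``multiplicity minus Hilbert function'' deficiencies (equivalently, the monotonicity of the Hilbert functions of the Cohen--Macaulay rings $R/J$ and $R/\wp^a$) is the other point that must be stated carefully, though it is standard for fat points.
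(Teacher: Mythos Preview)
Your argument is correct and is essentially the original proof from \cite{CTV}; the present paper does not prove this lemma at all but merely quotes it, so there is nothing further to compare against here. The Mayer--Vietoris sequence, the Hilbert-function identity, and the ``deficiency'' rewriting you use are exactly the ingredients of Catalisano--Trung--Valla's Lemma~1, and your caution about the convention for $\reg(R/(J+\wp^a))$ is well placed: the formula is correct precisely when that symbol is read as the first degree in which the Hilbert function of the finite-length quotient vanishes (equivalently, one more than its top nonzero degree), which is the regularity-index convention used throughout rather than the Castelnuovo--Mumford regularity of an artinian module.
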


To estimate  $\reg(R/(J+\wp^a))$ we shall  use the following lemma. \par
\begin{lemma}\label{lem22} \cite[Lemma 3]{CTV}
 Let $P_1,\ldots , P_r$ be distinct
points in $\Bbb P^n$ and $a, m_1,\ldots , m_r$ positive integers. Put $J =
\wp^{m_1}_1\cap \cdots\cap  \wp^{m_r}_r$ and $\wp = (X_1,\ldots , X_n)$. Then
$$\reg(R/(J+\wp^a)) \le b$$ if and only if $X^{b-i}_0M \in J+\wp^{i+1}$ for
every monomial $M$ of degree $i$ in $X_1,\ldots , X_n$, $i = 0,\ldots , a-1$.
\end{lemma}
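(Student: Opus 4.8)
The plan is to convert the regularity bound into a statement about which degree-$b$ monomials lie in $J+\wp^a$, and then to replace the single ideal $\wp^a$ by the family of smaller ideals $\wp^{i+1}$ by means of the $\wp$-adic filtration of $R/(J+\wp^a)$.

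Since $P=[1:0:\cdots:0]$ is distinct from $P_1,\ldots,P_r$, the ideal $J+\wp^a$ defines the empty subscheme of $\mathbb P^n$, so $R/(J+\wp^a)$ has finite length; its Hilbert function vanishes from some degree on, and $\reg(R/(J+\wp^a))\le b$ is equivalent to $(J+\wp^a)_t=R_t$ for all $t\ge b$. Because $(J+\wp^a)_b=R_b$ already forces $(J+\wp^a)_t=R_t$ for every $t\ge b$ (multiply by $R_{t-b}$), the bound reduces to the single equality $(J+\wp^a)_b=R_b$, i.e.\ to every monomial of degree $b$ lying in $J+\wp^a$. Sorting these monomials by their degree $i$ in $X_1,\ldots,X_n$ and writing each as $X_0^{b-i}M$ with $M$ of degree $i$, those with $i\ge a$ lie in $\wp^a$ automatically; hence the bound holds if and only if $X_0^{b-i}M\in J+\wp^a$ for all $i=0,\ldots,a-1$ and all monomials $M$ of degree $i$.

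It remains to show that, as $i$ and $M$ range over all these values simultaneously, the conditions $X_0^{b-i}M\in J+\wp^a$ are equivalent to the conditions $X_0^{b-i}M\in J+\wp^{i+1}$ of the statement. One direction is free: since $i+1\le a$ we have $\wp^a\subseteq\wp^{i+1}$, so $J+\wp^a\subseteq J+\wp^{i+1}$, and membership in the former implies membership in the latter.

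The converse is the heart of the argument and the step I expect to be the main obstacle. Assuming $X_0^{b-i}M\in J+\wp^{i+1}$ for all $i\le a-1$ and all $M$ of degree $i$, I would prove $X_0^{b-i}M\in J+\wp^a$ by descending induction on $i$; equivalently, one reads the same fact off the filtration of $R/(J+\wp^a)$ whose successive quotients are $(\wp^i+J)/(\wp^{i+1}+J)$ for $i=0,\ldots,a-1$. For $i=a-1$ the two memberships coincide. For $i<a-1$, write $X_0^{b-i}M=f+g$ with $f\in J$ and $g\in\wp^{i+1}$; taking degree-$b$ components (and using that $J$ and $\wp^{i+1}$ are homogeneous) I may assume $g$ is homogeneous of degree $b$, hence a $K$-combination of monomials $X_0^{b-k}M'$ with $k\ge i+1$. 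Each such monomial with $k\le a-1$ lies in $J+\wp^a$ by the inductive hypothesis, and each with $k\ge a$ lies in $\wp^a$; therefore $g\in J+\wp^a$ and so $X_0^{b-i}M\in J+\wp^a$. This closes the induction and yields the stated equivalence. The only delicate point is the homogeneity bookkeeping when splitting an element of $J+\wp^{i+1}$ into its $J$-part and its $\wp^{i+1}$-part and passing to the degree-$b$ component, which is precisely what legitimizes descending from $\wp^a$ to $\wp^{i+1}$.
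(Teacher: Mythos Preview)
The paper does not prove this lemma; it is quoted verbatim from \cite[Lemma~3]{CTV} and used as a black box (in fact only the ``if'' direction is ever invoked, via Remark~\ref{remark23}). So there is nothing in the paper to compare your argument to.

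That said, your argument is the standard one and is correct. The reduction of $\reg(R/(J+\wp^a))\le b$ to the single condition $(J+\wp^a)_b=R_b$ uses that $R/(J+\wp^a)$ is artinian (so its Hilbert function eventually vanishes) together with the fact that $R_1\cdot(J+\wp^a)_b\subseteq(J+\wp^a)_{b+1}$; this is exactly what you wrote. Sorting degree-$b$ monomials by their $\wp$-order and discarding those already in $\wp^a$ is the right move. For the nontrivial implication you run a descending induction on $i$: from $X_0^{b-i}M=f+g$ with $f\in J$, $g\in\wp^{i+1}$, take homogeneous degree-$b$ parts (legitimate since $J$ and $\wp^{i+1}$ are homogeneous), and observe that every monomial occurring in $g$ has $\wp$-order $\ge i+1$, hence falls under the inductive hypothesis or lies in $\wp^a$ outright. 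This is precisely the filtration argument one expects, and it goes through without trouble. The only point worth a sentence of care in a final write-up is the convention for $\reg$ on an artinian quotient (first vanishing degree versus last nonvanishing degree), so that the equivalence $\reg\le b\Longleftrightarrow(J+\wp^a)_b=R_b$ is stated with the correct offset; the paper's usage, via Remark~\ref{remark23}, is consistent with your reading.
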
  \par

Suppose that we can find $t$ hyperplanes $L_1, \ldots, L_t$ avoiding $P$ such
that $L_1 \cdots L_t M \in J$. For $j=1, \ldots, t$, since we can write
$L_j=X_0+G_j$ for some linear form $G_j\in \wp$, we get $X_0^{t}M \in
J+\wp^{i+1}$. Therefore, we have the following remark:

\begin{remark}\label{remark23} Assume that $L_1, \ldots, L_t$ are hyperplanes
avoiding $P$ such that $L_1 \cdots L_t  M \in J$ for every monomial $M$ of
degree $i$ in $X_1,\ldots , X_n$, $i = 0,\ldots , a-1$. If
$$\delta=\max\{t+i|  0\le i \le a-1\},$$
then $$\reg(R/(J+\wp^a)) \le \delta.$$
\end{remark}

\par\smallskip

The following lemma are the main results of \cite{Th6}. \par

\begin{lemma}\label{lem28} \cite[Theorem 3.4]{Th6} Let $P_1, \ldots, P_{s+2}$ be
distinct points not on a linear $(s-1)$-space in $\mathbb P^n$, $s \le n$, and
$m_1, \ldots, m_{s+2}$ be positive integers. Put $I=\wp_1^{m_1} \cap \cdots \cap
\wp_{s+2}^{m_{s+2}}$, $A=R/I$. Then, $$\reg(A) = \max \{ T_j |\  j=1,\ldots, n
\},$$ where $$T_j = \max\left\{\left[\frac{\sum_{l=1}^q m_{i_l}+ j- 2}{j}\right]
|\ P_{i_1}, \ldots , P_{i_q} \text{ lie on a linear }j\text{-space}\right\},$$
$j=1,\ldots, n$. \end{lemma}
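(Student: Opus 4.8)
The plan is to prove the two inequalities $\reg(A)\ge\max\{T_j\}$ and $\reg(A)\le\max\{T_j\}$ separately. The lower bound is the softer direction: whenever a sub-collection $P_{i_1},\ldots,P_{i_q}$ lies on a $j$-plane $\alpha$, one restricts the scheme to $\alpha$ and applies the standard lower bound for the regularity index of fat points supported on a linear $j$-space, obtaining $\reg(A)\ge[(\sum_{l=1}^q m_{i_l}+j-2)/j]$. Taking the maximum over all such sub-collections and all $j=1,\ldots,n$ yields $\reg(A)\ge\max\{T_j\}$, so the real content lies in the reverse inequality $\reg(A)\le\max\{T_j\}$.

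For the upper bound I would argue by induction on the number of points, with Lemma~\ref{lem21} as the inductive engine. Setting $P=P_{s+2}$, $a=m_{s+2}$, $\wp=\wp_{s+2}$ and $J=\wp_1^{m_1}\cap\cdots\cap\wp_{s+1}^{m_{s+1}}$, Lemma~\ref{lem21} gives
$$\reg(A)=\max\{a-1,\ \reg(R/J),\ \reg(R/(J+\wp^a))\}.$$
The first term is harmless: for any other point $P_i$ the line through $P_i$ and $P$ forces $T_1\ge m_i+a-1>a-1$, so $a-1\le\max\{T_j\}$. The second term is the Segre bound of the residual scheme $P_1,\ldots,P_{s+1}$; since the family of sub-configurations of $s+1$ points is contained in that of all $s+2$ points, the Segre bound is monotone under deleting a point, and $\reg(R/J)\le\max\{T_j\}$ follows from the inductive hypothesis---provided $P$ is chosen so that the residual configuration still meets the non-degeneracy hypothesis of the theorem. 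The decisive term is $\reg(R/(J+\wp^a))$.

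To control $\reg(R/(J+\wp^a))$ I would invoke Remark~\ref{remark23}: it suffices, for each $i$ with $0\le i\le a-1$ and each monomial $M$ of degree $i$ in $X_1,\ldots,X_n$, to exhibit hyperplanes $L_1,\ldots,L_{t_i}$ avoiding $P$ with $L_1\cdots L_{t_i}M\in J$, whence $\reg(R/(J+\wp^a))\le\max_{0\le i\le a-1}\{t_i+i\}$. This is where the hypothesis that $P_1,\ldots,P_{s+2}$ do not lie on a linear $(s-1)$-space enters: the support then spans a linear $s$-space or $(s+1)$-space, so after a change of coordinates I can arrange the points along a flag and, for each $P_k$, select hyperplanes through $P_k$ avoiding $P$. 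A product $L_1\cdots L_t M$ lies in $\wp_k^{m_k}$ once enough of the chosen hyperplanes pass through $P_k$ (counting also the order of vanishing contributed by $M$ when $P_k$ sits on the relevant coordinate subspace); choosing the $L_j$ greedily so that each $P_k$ is covered $m_k$ times, while letting a single hyperplane cover as many collinear or coplanar points simultaneously as the geometry permits, converts the count $t_i+i$ into the shape $[(\sum m_{i_l}+j-2)/j]$.

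The main obstacle is this last hyperplane-construction step: arranging the linear forms so that the worst case of $\max_i\{t_i+i\}$ equals $\max\{T_j\}$ rather than exceeding it. This demands a case analysis according to whether the support spans an $s$-space or an $(s+1)$-space, and according to which sub-collections of the $s+2$ points are collinear or coplanar, since exactly those incidences pin down which term $T_j$ is active. The remaining tasks---settling the base of the induction (few points, or $a=1$) and checking that $P$ can always be deleted so that $P_1,\ldots,P_{s+1}$ retain the required non-degeneracy---are bookkeeping that I expect to be routine once the hyperplane count is in hand.
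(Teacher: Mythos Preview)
The paper does not prove this lemma at all: it is quoted verbatim as \cite[Theorem~3.4]{Th6} and used as a black box (it supplies the base case $s=1$ in the proof of Theorem~\ref{thm44}). So there is no in-paper proof to compare your proposal against.

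That said, your outline is the correct strategy and is exactly the architecture of \cite{Th6} (and of the present paper's own argument for $s+3$ equimultiple points): the lower bound via restriction to a $j$-plane, the inductive splitting through Lemma~\ref{lem21}, and the control of $\reg(R/(J+\wp^a))$ via Remark~\ref{remark23} by manufacturing products of hyperplanes that avoid $P$ and cover each $P_k$ to order $m_k$. Where your proposal remains a sketch is precisely where the work lies: you defer the hyperplane construction and the accompanying case analysis to ``bookkeeping,'' but in \cite{Th6} (and analogously in Lemma~\ref{lem42} and Proposition~\ref{prop43} here) this step is the bulk of the argument. It requires choosing coordinates adapted to the span of the points, distinguishing whether the support spans an $s$-space or an $(s+1)$-space, tracking which sub-collections are degenerate, and then for each monomial $M$ producing an explicit product $L_1\cdots L_tM\in J$ with $t+i$ bounded by the relevant $T_j$. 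None of this is automatic, and your proposal does not indicate how you would carry it out; without that, the upper bound is not established. You also need to verify that a point $P$ can always be removed so that the remaining $s+1$ points still avoid a linear $(s-2)$-space---this is true (any $s+1$ of $s+2$ points spanning an $s$-space span at least an $(s-1)$-space), but it should be stated rather than assumed.
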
\par

\section{Regularity index of $s$ equimultiple fat points not on a
linear $(r-1)$-space, $s\le r+3$}

\ \ \ \ The following lemma  help us  to find a sharp upper bound for the
regularity index of $s$  fat points in $\mathbb P^n$.

\begin{lemma}\label{lem41} Let  $P_1, \ldots, P_s, P$ be distinct points in
$\mathbb P^n$ such that for $r$ arbitrary points of $\{P_1, \ldots, P_s\}$,
there always exists a linear $(r-1)$-space passing through these $r$ points and
avoiding $P$. Let $m_1, \ldots, m_s$ be positive integers. Consider the set
$\{P_1, \ldots, P_s\}$ with the chain of multiplicities $(m_1, \ldots, m_s)$.
Assume that $t$ is an integer such that
$$t \ge \max\left\{m_j, \left[\frac{\sum_{i=1}^s m_{i}+ r- 1}{r}\right]|\ j=1, \ldots s \right\}.$$
Then, there exist $t$ linear $(r-1)$-spaces, say $L_1, \ldots, L_t$, avoiding
$P$ such that for every point $P_j\in \{P_1, \ldots, P_s\}$, there are $m_j$
linear $(r-1)$-spaces (including multiplicity) of $\{L_1, \ldots, L_t\}$ passing
through the $P_j$.

\end{lemma}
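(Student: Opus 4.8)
The plan is to construct the $t$ linear $(r-1)$-spaces greedily, one at a time, always choosing at each step a $(r-1)$-space that passes through as many of the still-"unsatisfied" points as possible (up to $r$ of them) while avoiding $P$, and to show that this greedy process terminates after at most $t$ steps without ever forcing a point $P_j$ to be covered more than $m_j$ times. First I would set up bookkeeping: assign to each point $P_j$ a residual demand, initialized to $m_j$, and at each stage select a set $S$ of points with positive residual demand, with $|S|\le r$, together with a $(r-1)$-space $L$ through the points of $S$ avoiding $P$ (which exists by hypothesis whenever $|S|\le r$), then decrement the residual demand of each point in $S$ by one. The total demand to be discharged is $\sum_{i=1}^s m_i$, and each chosen $(r-1)$-space can absorb up to $r$ units of demand, so the counting target $t\ge\lceil(\sum m_i+r-1)/r\rceil$ is exactly what is needed to guarantee that $t$ spaces suffice, \emph{provided} we can keep every chosen space near-full.

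The key steps, in order, are: (i) formalize the greedy selection and prove that as long as the number of points with positive residual demand is at least $r$, one can always pick $r$ of them and a qualifying $(r-1)$-space, thereby discharging a full $r$ units per step; (ii) handle the endgame, where fewer than $r$ points retain positive demand — here one packs the remaining spaces through those few points, using the hypothesis again to route each through $P$-avoiding $(r-1)$-spaces and to pad with additional points if necessary so that no single point is overcharged; (iii) verify the two arithmetic constraints simultaneously, namely that the total number of spaces used does not exceed $t$ (this is where $t\ge\lceil(\sum m_i+r-1)/r\rceil$ enters) and that no individual point $P_j$ receives more than $m_j$ spaces (this is where $t\ge m_j$ for all $j$ enters, since in the worst case a single point's $m_j$ demands must be spread across distinct spaces).

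The delicate point is reconciling these two bounds in the endgame. If demand is distributed very unevenly — say one point has large multiplicity while the others are small — a naive greedy fill could exhaust the small points early and then be forced to route many spaces through the one remaining heavy point, potentially exceeding its own demand or requiring more than $t$ spaces. I would resolve this by a balancing argument: process points in order of decreasing residual demand, so that the heaviest point is paired with the next-heaviest at each step, which keeps the residual profile as flat as possible and prevents any premature depletion. The inequality $t\ge m_j$ guarantees that even the maximal single demand $m_j$ can be spread one-per-space across the $t$ available spaces, while $t\ge\lceil(\sum m_i+r-1)/r\rceil$ guarantees the aggregate capacity $rt\ge\sum m_i$ is enough; a standard scheduling/rearrangement inequality (analogous to the feasibility condition for edge-coloring or for distributing chips) shows these two conditions are jointly sufficient.

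The main obstacle I anticipate is precisely this endgame combinatorics: proving rigorously that the greedy-with-balancing procedure never gets stuck and never overshoots any $m_j$, rather than merely exhibiting the correct total count. Once the scheduling feasibility is established, the geometric input is cheap — the hypothesis that any $r$ of the $P_i$ admit a $P$-avoiding $(r-1)$-space through them is invoked only to realize each abstractly-chosen group of $\le r$ points as an actual $(r-1)$-space, and the linear form $X_0+G_j$ representation from Remark~\ref{remark23} is not even needed here, only the existence of the spaces. I therefore expect the proof to read as a combinatorial packing lemma dressed in geometric language, with the two displayed lower bounds on $t$ serving as the exact feasibility thresholds.
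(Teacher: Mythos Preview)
Your approach is correct and matches the paper's proof: the paper sorts $m_1\ge\cdots\ge m_s$, passes an $(r-1)$-space through the $r$ heaviest points $P_1,\ldots,P_r$ avoiding $P$, reduces those $r$ demands by one, and inducts on $\sum_i m_i$; the key arithmetic is precisely your ``balancing'' check, namely that $t-1\ge m_{r+1}$ follows from $t\ge\lceil((r+1)m_{r+1}+r-1)/r\rceil$, so the inductive hypothesis applies to the reduced profile. Your endgame (fewer than $r$ points with positive demand) is the paper's base case $s\le r$, dispatched by taking $L_1=\cdots=L_t=L$ for a single space $L$ through all remaining points---note the conclusion only requires \emph{at least} $m_j$ spaces through each $P_j$, so your worry about overcharging does not arise.
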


\begin{proof} We argue by induction on $\sum_{i=1}^s m_i$.  We may assume that
$m_1\ge \cdots \ge m_s$ (after relabelling, if necessary). If $s\le r$, then by
the assumption there is a linear $(r-1)$-space, say $L$, passing through all
points $P_1, \ldots, P_s$
 and avoiding $P$. Let $L_1=\cdots =L_t=L$. Since $t\ge \max\{m_j | j=1, \ldots, s\}$, for every point
 $P_j$ there exits $m_j$ linear $(r-1)$-spaces of $\{L_1, \ldots, L_t\}$ passing
through the $P_j$.

If $s>r$, then by the assumption there is a linear $(r-1)$-space, say $L_1$,
passing through all points $P_1, \ldots, P_r$ and avoiding $P$. Since $t\ge
\left[\frac{\sum_{i=1}^s m_i+ r- 1}{r}\right]$, we have
$$t-1\ge \left[\frac{(m_1-1)+\cdots+(m_r-1)+m_{r+1}+\cdots+m_s+ r- 1}{r}\right].$$
On the other hand, since $t\ge \left[\frac{\sum_{i=1}^s m_i+ r- 1}{r}\right]$
and $m_1\geq m_2\geq \cdots\geq m_{s}$, we have
$$t-1\ge \left[\frac{(r+1)m_{r+1}+r- 1}{r}\right]-1\ge m_{r+1}.$$
So,
$$t-1 \ge \max\left\{m_1-1, \ldots, m_r-1, m_{r+1}, \ldots, m_s
\right\}.$$

Consider the set $\{P_1, \ldots, P_s\}$ with the chain of multiplicities
$(m_1-1, \ldots, m_r-1, m_{r+1}, \ldots, m_s)$. By inductive assumption we can
find  $t-1$ linear $(r-1)$-spaces, say $L_2, \ldots, L_t$, avoiding $P$ such
that for $j=1, \ldots, r$ there are $m_j-1$ linear $(r-1)$-spaces of
$\{L_1,...,L_t\}$ passing through the point $P_j$; for $j=r+1, \ldots, s$ there
are $m_j$ linear $(r-1)$-spaces of $\{L_1,...,L_t\}$ passing through the point
$P_j$. Therefore, we have $t$ linear $(r-1)$-space $L_1,...,L_t$ as desired.

\end{proof}

\begin{lemma}\label{lem42}
Let $X=\{P_1, \ldots, P_{s+3}\}$ be a set of distinct points lie on a linear
$s$-space in $\mathbb P^n$, $3\le s \le n$,   such that there is not any linear
$(s-1)$-space containing $s+2$ points of $X$ and there is not any linear
$(s-2)$-space containing $s$ points of $X$. Let $\wp_1, \ldots, \wp_{s+3}$ be
the homogeneous prime ideals of the polynomial ring $R=K[X_0, \ldots, X_n]$
corresponding to the points $P_1, \ldots, P_s$. Assume that there is a linear
$(s-1)$-space, say $\alpha$, containing $s+1$ points $P_1, \ldots, P_{s+1}$ and
there is a linear $(s-1)$-space, say $\beta$, containing $s+1$ points $P_3,
\ldots, P_{s+3}$. Let $m$ be a positive integer. For $j=1, \ldots, n$, put
$$T_j = \max\left\{\left[\frac{mq+ j- 2}{j}\right] |\
P_{i_1}, \ldots , P_{i_q} \text{ lie on a linear }j\text{-space}\right\}.$$
Then,
$$\reg(R/(J+\wp_{s+3}^m))\le \max\{T_j|j=1,\ldots,n\},$$
where $J=\wp_1^m \cap \cdots \cap \wp_{s+2}^m.$
\end{lemma}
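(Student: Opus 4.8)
```latex
The plan is to bound $\reg(R/(J+\wp_{s+3}^m))$ by applying Remark~\ref{remark23} with $P=P_{s+3}$ and $a=m$. By that remark, it suffices to produce, for each monomial $M$ of degree $i$ in $X_1,\ldots,X_n$ (with $0\le i\le m-1$, after placing coordinates so that $\wp_{s+3}=(X_1,\ldots,X_n)$), a collection of $t_i$ linear forms (hyperplanes) avoiding $P_{s+3}$ whose product with $M$ lies in $J=\wp_1^m\cap\cdots\cap\wp_{s+2}^m$; then $\reg(R/(J+\wp_{s+3}^m))\le \max\{t_i+i\mid 0\le i\le m-1\}$. Since $\deg M=i$ contributes $i$ to the vanishing order along any hyperplane through the relevant $P_j$, the real task is to hit each point $P_j$ ($1\le j\le s+2$) to order $m$ \emph{total} between $M$ and the chosen hyperplanes, i.e. to supply roughly $m$ hyperplanes through each $P_j$ while economizing by grouping points on the $(s-1)$-spaces $\alpha$ and $\beta$.

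The key idea is that a linear $(s-1)$-space sitting inside a linear $j$-space can be used as a single hyperplane that simultaneously vanishes at many of the $P_j$. Concretely, I would invoke Lemma~\ref{lem41} with $r=s$: for the $s+2$ points $P_1,\ldots,P_{s+2}$ (all equimultiple of multiplicity $m$), whenever any $s$ of them span an $(s-1)$-space avoiding $P_{s+3}$, Lemma~\ref{lem41} produces $t$ such $(s-1)$-spaces so that each $P_j$ is covered exactly $m$ times. The threshold forced by that lemma is $t\ge\max\{m,\lceil((s+2)m+s-1)/s\rceil\}$ (in the paper's floor notation $\left[\frac{(s+2)m+s-1}{s}\right]$), and one checks this matches the candidate $T_j$ for the relevant $j$. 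The reason the hypotheses on $X$ are exactly the genericity needed: the condition that no $(s-1)$-space contains $s+2$ of the points and no $(s-2)$-space contains $s$ of them guarantees that the general-position assumption of Lemma~\ref{lem41} holds for the subconfiguration $\{P_1,\ldots,P_{s+2}\}$ relative to $P_{s+3}$, so that ``for $s$ arbitrary points there is an $(s-1)$-space through them avoiding $P_{s+3}$.''

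With the $(s-1)$-spaces in hand as linear forms, the remaining step is the monomial bookkeeping: for each $i$ I must verify that $L_1\cdots L_{t}\,M\in J$, which means checking that along each prime $\wp_j^m$ the total vanishing order of $L_1\cdots L_t M$ is at least $m$. Since each $P_j$ lies on exactly $m$ of the chosen hyperplanes, the product already vanishes to order $m$ at each $P_j$ regardless of $M$, so the membership holds and the extra factor $M$ only helps. Thus $t$ linear forms suffice for every $i$, giving $\reg(R/(J+\wp_{s+3}^m))\le t$, and I must finally show $t\le\max\{T_j\mid j=1,\ldots,n\}$ by comparing $\left[\frac{(s+2)m+s-1}{s}\right]$ against $T_s$ (taking $q=s+2$, $j=s$, noting all $s+2$ points lie on the ambient $s$-space) and against $m=T_j$ for the relevant small $j$.

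The hard part will be the two places where the special incidence structure must be exploited rather than treated generically. First, because $\alpha$ contains the $s+1$ points $P_1,\ldots,P_{s+1}$ and $\beta$ contains $P_3,\ldots,P_{s+3}$, these are genuine $(s-1)$-spaces through $s+1$ points, which is one more than the generic $s$; so when I build the $t$ hyperplanes I should prefer using $\alpha$ and $\beta$ themselves as efficient forms (each kills $s+1$ points at once), and I must confirm neither $\alpha$ nor $\beta$ passes through $P_{s+3}$ (for $\alpha$) or may be used while avoiding $P_{s+3}$ (for $\beta$, which does contain $P_{s+3}$, so $\beta$ is \emph{not} available and only $\alpha$ and spanning $(s-1)$-spaces of other $s$-subsets are). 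Balancing this — using $\alpha$ up to its allowed multiplicity while filling the rest via Lemma~\ref{lem41} so that the $P_j$ counts come out to exactly $m$ — is where I expect the delicate case analysis to live, and where the constraint $s+3$ points with $s\le n$ keeps the count small enough to finish by hand. Second, I must make sure the resulting maximum of $t+i$ is genuinely absorbed into $\max\{T_j\}$ and does not exceed it; the equimultiplicity $m_1=\cdots=m_s=m$ is what makes the arithmetic of $\left[\frac{mq+j-2}{j}\right]$ tractable and is used crucially to keep the bound tight.
```
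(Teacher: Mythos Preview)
Your proposal has two genuine gaps that prevent it from going through.

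\textbf{First, Lemma~\ref{lem41} does not apply.} You want to use it with $r=s$, $P=P_{s+3}$, and the point set $\{P_1,\ldots,P_{s+2}\}$. Its hypothesis requires that \emph{every} $s$-subset spans an $(s-1)$-space avoiding $P_{s+3}$. But take the $s$ points $P_3,\ldots,P_{s+2}$. By hypothesis no $(s-2)$-space contains $s$ points of $X$, so these $s$ points span an $(s-1)$-space; since they all lie in the $(s-1)$-space $\beta$, their span is exactly $\beta$, and $P_{s+3}\in\beta$. Hence the unique $(s-1)$-space through $P_3,\ldots,P_{s+2}$ contains $P_{s+3}$, and the hypothesis of Lemma~\ref{lem41} fails. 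You note that $\beta$ is ``not available,'' but this is not a matter of preference: it obstructs the lemma outright. The paper circumvents this by constructing a third $(s-1)$-space $\gamma$ through $P_1,P_2,P_{s+2}$ and $s-3$ of $P_3,\ldots,P_{s+1}$ avoiding $P_{s+3}$, and then works with hyperplanes $H\supset\alpha$ and $L\supset\gamma$ rather than invoking Lemma~\ref{lem41} at all.

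\textbf{Second, the arithmetic of Remark~\ref{remark23} is misapplied.} Even granting your hyperplanes, if $t$ does not depend on $i$ then Remark~\ref{remark23} gives $\reg(R/(J+\wp_{s+3}^m))\le \max\{t+i:0\le i\le m-1\}=t+m-1$, not $\le t$. With $t=\left[\frac{(s+2)m+s-1}{s}\right]$ this exceeds $\max_j T_j$ already for $s=3$, $m=3$ (where $t+m-1=7$ but $\max_j T_j=6$). The paper avoids this by letting the hyperplane count depend on the monomial $M$: it chooses coordinates so that $M\in\wp_j^{i-c_{j'}}$ for the appropriate $j$, which lowers the multiplicity the hyperplanes must supply at those points as $i$ grows. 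This trade-off is what forces the delicate case analysis (on $m_{s+2}+t$ with $t=\max\{m_3,m_{s+1}\}$, and on $c_s$ versus $i$) and is the essential content of the proof that your outline does not yet contain.
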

\begin{proof}
We remark that there exists a linear $(s-1)$-space, say $\gamma$, containing
$P_1$, $P_2$, $P_{s+2}$, $s-3$ points of $\{P_3, \ldots, P_{s+1}\}$ and avoiding
$P_{s+3}$. In fact, we assume that $\pi$ is a linear $(s-1)$-space containing
$P_1$, $P_2$, $P_{s+2}$, $s-3$ points $P_5, \ldots, P_{s+1}$ and $P_{s+3}$. Then
let $\gamma$ be the linear $(s-1)$-space containing $P_1$, $P_2$, $P_{s+2}$,
$P_4, P_5, \ldots, P_{s}$. We have $P_{s+3}\notin \gamma$ (If $P_{s+3} \in
\gamma$, then $\gamma=\pi$ is a linear $(s-1)$-space containing $s+2$ points
$P_1, P_2, P_4, \ldots, P_{s+3}$ of $X$, a contradiction).\par

We may assume that $P_1, P_2, P_4, P_5, \ldots, P_{s}, P_{s+2}\in \gamma$. Since
arbitrary $s$ points of $\beta \cap X$ do not lie on a linear $(s-2)$-space, we
can put $P_{s+3}=(1, 0, \ldots, 0)$, $P_3=(0, 0, 1, 0, \ldots, 0)$, $P_5=(0, 0,
0, \underset{4}{\underbrace{1}}, 0, \ldots, 0)$, ..., $P_j=(0, \ldots, 0,
\underset{j-1}{\underbrace{1}}, 0, \ldots, 0)$, $j=5, \ldots, s+2$. Since
$P_2\notin \beta$, we can put $P_2=(0, 1, 0, \ldots, 0)$.\par

For every monomial $M=X_1^{c_1}\cdots X_{n}^{c_n}$, $c_1+\cdots +c_n=i$, $i=0,
\ldots, m-1$. Put  $m_1=m_4=m$, $m_2=m-i+c_1$, $m_3=m-i+c_2$, $m_j=m-i+c_{j-2}$,
$j=5, \ldots, s+2$. Let $H$ be a hyperplane containing $\alpha$ and avoiding
$P_{s+3}$, let $L$ be a hyperplane containing $\gamma$ and avoiding $P_{s+3}$.
Put
$$t=\max\{m_3, m_{s+1}\}.$$
Since $c_s+\max\{c_2, c_{s-1}\} \le i$, we have $m_{s+2}+t\le 2m-i$. We consider
the following cases:

\par\smallskip\noindent {\it\bf Case 1:} $m_{s+2}+t\le 2m-i-1$, or  $s=3$, or $s=4$
and $m=2$. We have $P_1,P_2, P_4, \ldots,P_s \in H\cap L$; $P_3, P_{s+1} \in H$;
$P_{s+2}\in L$. Therefore,
$$H^{\max\{m-m_{s+2}, t\}} L^{m_{s+2}} \in \wp_1^m\cap \wp_2^m \cap \wp_3^t
\cap \wp_4^m \cap \cdots \cap \wp_{s}^m \cap \wp_{s+1}^t\cap
\wp_{s+2}^{m_{s+2}}.$$ Moreover, since $M\in
\wp_3^{m-m_3}\cap\wp_{s+1}^{m-m_{s+1}}\cap \wp_{s+2}^{m-m_{s+2}}$ and
$t=\max\{m_3, m_{s+1}\}$, we have
$$H^{\max\{m-m_{s+2}, t\}} L^{m_{s+2}} M \in \wp_1^m \cap \cdots \cap \wp_{s+2}^m=J.$$
By Remark \ref{remark23} we get
\begin{align*}\reg(R/(J+\wp^m_{s+3}))&\le \max\{\max\{m-m_{s+2}, t\}+m_{s+2}+i| i=1, \ldots, m-1\}\\
&\le \max\{m+i, t+m_{s+2}+i| i=0, \ldots, m-1 \}.\end{align*} If $m_{s+2}+t\le
2m-i-1$, then
$$\max\{m+i, t+m_{s+2}+i| i=0, \ldots, m-1 \} \le 2m-1=T_1.$$
If $s=3$, then
$$\max\{m+i, t+m_{s+2}+i| i=0, \ldots, m-1 \} \le 2m=T_2.$$
If $s=4$ and $m=2$, then
$$\max\{m+i, t+m_{s+2}+i| i=0, \ldots, m-1 \} \le 2m=4=T_4.$$

\par\smallskip\noindent {\it\bf Case 2:} $m_{s+2}+t= 2m-i$ and $s\ge 4$ and $m\ge 3$,
or $m_{s+2}+t= 2m-i$ and $s\ge 5$ and $m=2$. Without loss of generality we can
assume that $m_3 \ge m_{s+1}$. Then $t=\max\{m_3, m_{s+1}\}=m_3$. We have
$2m-i=m_{s+2}+t=m_{s+2}+m_3=m-i+c_s+m-i+c_2$. This implies $c_2+c_s=i$. So,
$c_j=0$, for every $2 \ne j \ne s$. We consider two following cases for $i$:\par

\par\smallskip\noindent {\it\bf Case 2.1:} $i=0$. By the assumption, there is not any
linear $(s-2)$-space containing $s$ points of $X$, we have $P_3$, $P_{s+1}$,
$P_{s+2}$ and $P_{s+3}$ not on a linear $2$-space. Therefore, there is a
hyperplane, say $\sigma$, containing $P_3$, $P_{s+1}$, $P_{s+2}$ and avoiding
$P_{s+3}$. Recall that  $P_1, \ldots, P_{s+1} \in H$ and $P_1, P_2, P_4, \ldots,
P_s, P_{s+2}\in L$. Thus, we have
$$H^{m-1}L^{m-1}\sigma \in \wp_1^m \cap \cdots \cap \wp_{s+2}^m=J.$$
It follows that \begin{align}H^{m-1}L^{m-1}\sigma M\in J \end{align} with
$m-1+m-1+1+i=2m-1=T_1$.

\par\smallskip\noindent {\it\bf Case 2.2:} $i\ge 1$. We consider three following cases for $c_s$:

\par\smallskip $\bullet$ $c_s < i$: Since $P_3, \ldots, P_{s+3}$ lie on the linear $(s-1)$-space $\beta$ and there
is not any linear $(s-2)$-space containing $s$ points of $X$, the linear
$(s-1)$-space containing $s$ points $P_1, P_3, \ldots, P_s, P_{s+2}$ avoids
$P_{s+3}$. Then there exists a hyperplane, say $\pi$, containing $P_1, P_3,
\ldots, P_s, P_{s+2}$ and avoiding $P_{s+3}$. Moreover, since $P_1, \ldots,
P_{s+1} \in H$ and $P_1, P_2, P_4, \ldots, P_s, P_{s+2}\in L$, we have
$$H^{m_3-1}L^{m_{s+2}-1}\pi \in \wp_1^{m_{s+2}+m_3-1} \cap \wp_2^{m_{s+2}+m_3-2}\cap \wp_3^{m_3}\cap
\wp_4^{m_{s+2}+m_3-1}\cap \cdots \cap \wp_s^{m_{s+2}+m_3-1}\cap
\wp_{s+1}^{m_3-1} \cap \wp_{s+2}^{m_{s+2}}.$$ Since $m_{s+2}+m_3=2m-i$ and $i\le
m-1$, we have $m_{s+2}+m_3-1=2m-i-1\ge m$. Since $i\ge 1$, we have $m_2=m-i \le
m-1\le m_{s+2}+m_3-2$. Since $c_2+c_s=i$ and $c_s <i$, we have $c_2 \ge 1$. Note
that $c_{s-1}=0$. Thus, $m_{s+1}=m-i+c_{s-1} \le m-i+c_2=m_3-1$. Therefore, we
have
$$H^{m_3-1}L^{m_{s+2}-1}\pi \in \wp_1^{m} \cap \wp_2^{m-i}\cap \wp_3^{m-i+c_2}\cap
\wp_4^{m}\cap \cdots \cap \wp_s^{m}\cap \wp_{s+1}^{m-i+c_{s-1}} \cap
\wp_{s+2}^{m-i+c_s}.$$ Note that $c_j=0$ for every $2\ne j\ne s$. So, $M\in
\wp_2^{i} \cap \wp_3^{i-c_2}\cap \wp_5^{i} \cap\cdots \cap \wp_{s}^{i} \cap
\wp_{s+1}^{i-c_{s-1}} \cap \wp_{s+2}^{i-c_s}$. Therefore,
\begin{align}H^{m_3-1}L^{m_{s+2}-1}\pi M \in \wp_1^m \cap \cdots \cap
\wp_{s+2}^m=J\end{align} with $m_3-1+m_{s+2}-1+1+i=2m-i-1+i=2m-1=T_1$.

\par\smallskip $\bullet$ $c_s = i$ and $m\ge 3$: Then we have $c_j=0, j=1, \ldots, s-1$. Since
$P_1, P_3, P_4, \ldots, P_{s}, P_{s+2}, P_{s+3}$ do not lie on a linear
$(s-1)$-space, there exists a hyperplane, say $\sigma_1$, containing $P_1, P_3,
P_4, \ldots, P_{s}, P_{s+2}$ and avoiding $P_{s+3}$ (If  $P_1, P_3, P_4, \ldots,
P_{s}, P_{s+2}, P_{s+3}$ lie on a linear $(s-1)$-space, then this linear
$(s-1)$-space contains $P_{s+1}$. It follows that there is a linear
$(s-1)$-space containing $s+2$ points of $X$, a contradiction).  Similarly,
since $P_1, P_4, \ldots, P_{s}, P_{s+1} P_{s+2}, P_{s+3}$ do not lie on a linear
$(s-1)$-space, there exists a hyperplane, say $\sigma_2$, containing $P_1, P_4,
\ldots, P_{s}, P_{s+1}, P_{s+2}$ and avoiding $P_{s+3}$. Moreover, since $P_1,
\ldots, P_{s+1} \in H$ and $P_1, P_2, P_4, \ldots, P_s, P_{s+2}\in L$, we have
$$H^{m-i-1}L^{m-2}\sigma_2\sigma_1 \in \wp_1^{2m-i-1} \cap \wp_2^{m-i}\cap \wp_3^{m-i}\cap
\wp_4^{2m-i-1}\cap \cdots \cap \wp_s^{2m-i-1}\cap \wp_{s+1}^{m-i} \cap
\wp_{s+2}^{m}.$$ Since $2m-i-1 \ge m$, we have
$$H^{m-i-1}L^{m-2}\sigma_2\sigma_1 \in \wp_1^{m} \cap \wp_2^{m-i}\cap \wp_3^{m-i}\cap
\wp_4^{m}\cap \cdots \cap \wp_s^{m}\cap \wp_{s+1}^{m-i} \cap \wp_{s+2}^{m}.$$
Moreover, since $M\in \wp_2^{i} \cap \wp_3^{i}\cap \wp_5^{i} \cap\cdots \cap
\wp_{s}^{i} \cap  \wp_{s+1}^{i}$, we have
\begin{align} H^{m-i-1}L^{m-2}\sigma_2\sigma_1 M \in \wp_1^m \cap \cdots \cap
\wp_{s+2}^m=J \end{align} with $m-i-1+m-2+2 +i = 2m-1=T_1$.

\par\smallskip $\bullet$  $c_s = i$ and $m=2$ and $s\ge 5$. Since $m=2$ and $1\le i \le m-1$, we get $i=1$.
Let $\beta_1$ be a linear $(s-1)$-space containing $s$ points $P_1, P_3, \ldots,
P_{s-1}, P_{s+1}, P_{s+2}$. If $P_{s+3}\in \beta_1$, then $s$ points $P_3,
\ldots, P_{s-1}, P_{s+1}, P_{s+2}$ are contained in the linear $(s-2)$-space
$\beta_1 \cap \beta$. This contradicts our assumption. So, $P_3 \notin \beta_1$.
 Therefore, there exists a hyperplane, say $\varrho$, containing $\beta_1$ and avoiding $P_{s+3}$. Recall that $L$
contains $P_1, P_2, P_4, \ldots, P_s, P_{s+2}$. Thus, we have
$$\varrho L \in \wp_1^2 \cap \wp_2 \cap \wp_3 \cap \wp_4^2 \cap \wp_5 \cap \cdots
\cap \wp_{s+1} \cap \wp_{s+2}^2.$$ Moreover, since $M\in \wp_2 \cap \wp_3 \cap
\wp_5 \cap\cdots \cap \wp_{s} \cap  \wp_{s+1}$, we have \begin{align} \varrho L
M \in \wp_1^2 \cap \cdots \cap \wp_{s+2}^2=J \end{align} with $1+1+i =3=T_1$.

From $(1), (2), (3), (4)$ and by Remark \ref{remark23} we get
$$\reg(R/(J+\wp^m_{s+3}))\le T_1.$$

\end{proof}

\par\smallskip

We need the following proposition to find a sharp upper bound for the regularity
index of $s+3$ equimultiple fat points not on a linear $(s-1)$-space.\par

\par\smallskip

\begin{proposition}\label{prop43} Let $X=\{P_1, \ldots, P_{s+3}\}$ be a
set of distinct points lie on a linear $s$-space $\gamma$ but $X$ is not in
general position on $\gamma$ and $X$ does not lie on a linear $(s-1)$-space in
$\mathbb P^n$, $2\le s \le n$. Let  $m$ be a positive integer. Assume that
$\wp_1, \ldots, \wp_{s+3}$ are the homogeneous prime ideals of the polynomial
ring $R=K[X_0, \ldots, X_n]$ corresponding to the points $P_1, \ldots, P_{s+3}$.
For $j=1, \ldots, n$, put
$$T_j = \max\left\{\left[\frac{mq+ j- 2}{j}\right] |\
P_{i_1}, \ldots , P_{i_q} \text{ lie on a linear }j\text{-space}\right\}.$$
Then, there exists a point $P_{i_0}\in X$ such that
$$\reg(R/(J+\wp_{i_0}^m))\le \max\{T_j|j=1,\ldots,n\},$$
where
$$J=\underset{i\ne i_0}{\cap}\wp_i^m.$$
\end{proposition}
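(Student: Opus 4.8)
The plan is to reduce the assertion to the cross-term estimate of Remark~\ref{remark23} and then to argue by a case analysis on the degeneracy type of $X$ on the $s$-space $\gamma$. The point $P_{i_0}$ will always be chosen so that, after fixing coordinates with $P_{i_0}=(1:0:\cdots:0)$ and hence $\wp_{i_0}=(X_1,\ldots,X_n)$, every monomial $M$ of degree $i\le m-1$ in $X_1,\ldots,X_n$ can be multiplied by a controlled number $t$ of hyperplanes avoiding $P_{i_0}$ so that the product lies in $J$, with $\max\{t+i\mid 0\le i\le m-1\}\le \max\{T_j\mid j=1,\ldots,n\}$. Each such hyperplane is obtained by lifting a suitable linear $(s-1)$-space through a subset of $\{P_i\mid i\ne i_0\}$ to a hyperplane of $\mathbb P^n$ missing $P_{i_0}$: this is possible because $s\le n$, so any $(s-1)$-space not containing $P_{i_0}$ lies in a hyperplane avoiding $P_{i_0}$.

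First I would isolate, among all proper linear subspaces carrying an excessive number of points, one of maximal dimension $\delta\le s-1$, and split the argument according to $\delta$ and the number of points it carries. If some linear $(s-1)$-space contains $s+2$ of the points, I take $P_{i_0}$ to be the unique point off it; a single lifted hyperplane through those $s+2$ points, used with multiplicity, supplies the needed spaces and the exponent count returns the relevant $T_{s-1}$ or $T_s$. If no $(s-1)$-space carries $s+2$ points but two distinct $(s-1)$-spaces $\alpha,\beta$ each carry $s+1$ points and no $(s-2)$-space carries $s$ points, then after relabelling the hypotheses of Lemma~\ref{lem42} are met and I invoke it directly with $P_{i_0}=P_{s+3}$. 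The remaining, more degenerate configurations---those in which some $(s-2)$-space carries $s$ points, or in which one cannot produce two such $(s-1)$-spaces, together with the base case $s=2$---I treat by the same explicit construction: normalise coordinates so that the degenerate points sit at coordinate vertices, exhibit hyperplanes $H,L,\sigma,\ldots$ containing the relevant flats and avoiding $P_{i_0}$, and check the containments $H^{a_1}L^{a_2}\cdots M\in J$ exactly as in the proof of Lemma~\ref{lem42}.

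The bookkeeping then reduces, in each case, to verifying that the total degree $t+i$ of the monomial times the product of hyperplanes does not exceed the appropriate $T_j$. I would record the vanishing orders $m_k$ that $M$ forces at each remaining point, governed by its exponents $c_1,\ldots,c_n$, and distribute the hyperplane factors so that each remaining point accumulates multiplicity at least $m$. When $X$ is highly degenerate this is comfortable; the binding constraints, and hence the only delicate estimates, occur at the extreme monomials where the exponent mass concentrates on the two points of minimal forced multiplicity---precisely the situation producing the $2m-1=T_1$ equalities seen in Cases~2.1 and~2.2 of Lemma~\ref{lem42}.

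The step I expect to be the main obstacle is the existence of a good $P_{i_0}$ in the intermediate regime, when the maximal defective flat is an $(s-1)$-space carrying only $s+1$ points and the auxiliary non-degeneracy assumptions of Lemma~\ref{lem42} fail. There one must either find a second $(s-1)$-space through a different set of $s+1$ points, reducing to Lemma~\ref{lem42} after relabelling, or else locate a point whose removal leaves $s+2$ points spanning an honest $(s-1)$-space, so that the lifted hyperplanes through subsets of them carry full multiplicity. Ruling out, by means of the facts that $X$ spans $\gamma$ and lies on no $(s-1)$-space, the configurations where no such choice exists is the crux; everything past it is the exponent arithmetic already rehearsed in Lemma~\ref{lem42}.
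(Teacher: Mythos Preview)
Your outline captures the general framework---reduce to Remark~\ref{remark23}, choose $P_{i_0}$, lift $(s-1)$-spaces to hyperplanes---but the case analysis is organised around the wrong invariant, and the main engine of the paper's proof is missing from your plan.

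The paper does not pick a defective flat of \emph{maximal} dimension; it takes $k=\min\{h:\text{some linear }h\text{-space contains }h+2\text{ points of }X\}$. Minimality is what guarantees that any $k+1$ of the $k+2$ points on the distinguished $k$-space $\alpha$ are in general position there, which is needed both to set up coordinates and to feed Lemma~\ref{lem41}. With a maximal-dimension choice you lose this.

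More importantly, you propose to handle the configurations where some $(s-2)$-space carries $s$ points ``exactly as in the proof of Lemma~\ref{lem42}''. That cannot work: the hypotheses of Lemma~\ref{lem42} explicitly \emph{forbid} an $(s-2)$-space through $s$ points, and its two-or-three-hyperplane constructions rely on that. In the paper this regime is Case~2.1 (where $s-k\ge 2$), and it is handled by a device you do not mention: split the remaining points into the $k+1$ points on $\alpha$ and the $s-k+1$ points off $\alpha$; apply Lemma~\ref{lem41} separately to obtain $t_1$ linear $(k-1)$-spaces and $t_2$ linear $(s-k-1)$-spaces, each family avoiding $P_{i_0}$ and covering its group with the right multiplicities; then span $L_j$ and $L'_j$ pairwise to get $(s-1)$-spaces $H_j$ which still avoid $P_{i_0}$ (else $H_j\supset\alpha$ would force $s+2$ points on an $(s-1)$-space). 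The exponent arithmetic is then governed by $t=\max\{t_1,t_2\}$ and lands on $T_1$, $T_k$ or $T_s$ according to subcases. Lemma~\ref{lem42} is invoked only in the narrow residual Case~2.2.2 where $k=s-1$, $s\ge 3$, and a second $(s-1)$-space $\beta$ through $P_{s+2},P_{s+3}$ and $s-1$ points of $\alpha$ exists; Case~2.2.1 (no such $\beta$) again uses Lemma~\ref{lem41}, now directly on all $s+2$ points with $(s-1)$-spaces. Your plan has no substitute for Lemma~\ref{lem41}, and that is the genuine gap.
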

\begin{proof} We have $T_1 \ge 2m-1$ and $T_s=\left[\frac{(s+3)m+ s- 2}{s}\right] \ge T_{s+1} \ge \cdots \ge T_n$. We consider two following cases:

\par\smallskip\noindent {\it\bf Case 1:} There exists a hyperplane, say $H$, avoiding a point
of $X$ and passing through $s+2$ points of $X$. We may assume that $P_{s+3}
\notin H$ and $P_1, \ldots, P_{s+2} \in H$ (after relabeling, if necessary). Put
$P_{i_0}=P_{s+3}=(1, 0, \ldots, 0)$. For every monomial $M=X_1^{c_1}\cdots
X_{n}^{c_n}$, $c_1+\cdots +c_n=i$, $i=0, \ldots, m-1$. We have $H^m \in J$. It
implies that $H^m M \in J$. By Remark \ref{remark23} we get
$$\reg(R/(J+\wp^m_{i_0}))\le \max\{m+i| i=0, \ldots, m-1\}\le 2m-1 \le T_1.$$

\par\smallskip\noindent {\it\bf Case 2:} There does not exist any hyperplane avoiding a point
of $X$ and passing through $s+2$ points of $X$. This implies that there does not
exist any linear $(s-1)$-space passing through $s+2$ points of $X$. So, a linear
$(s-1)$-space contains at most $s+1$ points of $X$. Since $X$ is not in general
position in the linear $s$-spcace $\gamma$, there exists a linear $(s-1)$-space
containing $s+1$ points of $X$. Put
$$k:=\min\{h| \text{ there exists a linear } h \text{-space containng } h+2 \text{ points of }
X\}.$$ Then, $k\le s-1$. Since a linear $(s-1)$-space containing at most $s+1$
points of $X$, we have a linear $h$-space containing at mots $h+2$ points of $X$
with $h\le s-1$. Thus, $T_k \ge T_h$, $h=k+1, \ldots, s-1$.

\par Let $\alpha$ be a linear $k$-space containing $k+2$ points of $X$. We may
assume that $P_1, \ldots, P_{k+2} \in \alpha$ and $P_{k+3}, \ldots, P_{s+3}
\notin \alpha$ (after relabeling, if necessary). We consider the two following
subcases:

\par\smallskip\noindent {\it\bf Case 2.1:} $m=1$ or $s-k \ge k$ or $s-k\ge 2$.
For arbitrary $s$ points of $X$, there always exists a linear $(s-1)$-space
containing them. Thus, there is a linear $(s-1)$-space, say $\beta$, such that
$\beta$ contains $P_{k+3}, \ldots, P_{s+3}$ and $\beta$ contains $k-1$ points of
$X\cap \alpha$. We are considering Case 2, so $\beta$ avoids two points of $X
\cap \alpha$. We may assume that $P_4, \ldots, P_{k+2} \in \beta$, $P_1\notin
\beta$, $P_2 \notin \beta$.\par

By the property of $k$, we have arbitrary $k+1$ points of $X\cap \alpha$ not on
a linear $(k-1)$-space. We can put $P_{i_0}=P_1=(1, 0, \ldots, 0)$, $P_2=(0,
\underset{2}{\underbrace{1}}, 0, \ldots, 0)$, $P_3=(0, 0,
\underset{3}{\underbrace{1}}, 0, \ldots, 0)$, $P_5=(0, 0, 0,
\underset{4}{\underbrace{1}}, 0, \ldots, 0)$,..., $P_j=(0, \ldots, 0,
\underset{j-1}{\underbrace{1}}, 0, \ldots, 0)$, $j=5, \ldots, k+2$. Also by the
property of $k$, we have $P_1, P_2, P_3, P_5,  \ldots, P_{k+2}$ and arbitrary
$s-k-1$ points of $X\setminus \alpha$  not on a linear $(s-2)$-space. We can put
$P_j=(0, \ldots, 0, \underset{j-1}{\underbrace{1}}, 0, \ldots, 0)$, $j=k+3,
\ldots, s+2$.

\par For every monomial $M=X_1^{c_1}\cdots X_{n}^{c_n}$, $c_1+\cdots +c_n=i$,
$i=0, \ldots, m-1$. Put  $m_4=m_{s+3}=m$, $m_2=m-i+c_1$, $m_3=m-i+c_2$,
$m_j=m-i+c_{j-2}$, $j=5, \ldots, s+2$. Put $$t_1=\max\left\{m_j,
\left[\frac{\sum_{l=2}^{k+2} m_{l}+ k- 1}{k}\right]|\ j=2, \ldots k+2
\right\}.$$ Since there always exists a linear $(k-1)$-space passing arbitrary
$k$ points of $\{P_2, \ldots, P_{k+2}\}$ and avoiding $P_{i_0}$, by Lemma
\ref{lem41} we can find $t_1$ linear $(k-1)$-space avoiding $P_{i_0}$, say $L_1,
\ldots, L_{t_1}$, such that  for every point $P_j\in \{P_2, \ldots, P_{k+2}\}$,
there are $m_j$ linear $(k-1)$-spaces (including multiplicity) of $\{L_1,
\ldots, L_{t_1}\}$ passing through the $P_j$.

Consider the set $\{ P_{k+3}, \ldots, P_{s+3}\}$. We remark that there is not
any linear $(s-k-1)$-space containing $P_{i_0}$ and $s-k$ points of $\{ P_{k+3},
\ldots, P_{s+3}\}$. In fact, assume that there exists a linear $(s-k-1)$-space,
say $L'$, containing $P_{i_0}$ and $s-k$ points of $\{ P_{k+3}, \ldots,
P_{s+3}\}$. Then, linear $(s-1)$-space, say $H$, containing the linear
$(k-1)$-space $L_1$ and the linear $(s-k-1)$-space $L'$ contains $P_{i_0}$.
Therefore, this linear $(s-1)$-space $H$ contains $\alpha$. So, $H$ contains
$s+2$ points of $X$, a contradiction.

Put
$$t_2=\max\left\{m_j, \left[\frac{\sum_{l=k+3}^{s+3} m_{l}+ s-k-1}{s-k}\right]|\ j=k+3,
\ldots s+3 \right\}.$$ Since there always exists a linear $(s-k-1)$-space
passing arbitrary $s-k$ points of $\{P_{k+3}, \ldots, P_{s+3}\}$, by the above
remark we can find $t_2$ linear $(s-k-1)$-space avoiding $P_{i_0}$, say $L'_1,
\ldots, L'_{t_2}$, such that for every point $P_j\in \{P_{k+2}, \ldots,
P_{s+3}\}$, there are $m_j$ linear $(s-k-1)$-spaces (including multiplicity) of
$\{L'_1, \ldots, L'_{t_2}\}$ passing through the $P_j$. Put $$t=\max\{t_1,
t_2\}.$$ For $j=1, \ldots, t$, there always exists a linear $(s-1)$-space, say
$H_j$, containing $L_j$, $L'_j$ and avoiding $P_{i_0}$ (If $H_j$ contains
$P_{i_0}$, then $H_j$ contains $\alpha$ and $L'_j$. This implies that $H_j$
contains $s+2$ points of $X$, a contradiction). Since $H_j$ contains $L_j$ and
$L'_j$, $j=1, \ldots, t$, we get that for every point $P_i \in \{ P_2, \ldots,
P_{s+3}\}$, there are $m_i$ hyperplanes of $\{ H_1, \ldots, H_t\}$ passing
through the $P_i$. Therefore,
$$H_1\cdots H_t \in \wp_2^{m_2} \cap \cdots \cap \wp_{s+3}^{m_{s+3}}=\wp_2^{m-i+c_1} \cap \wp_3^{m-i+c_2}\cap
\wp_4^m \cap \wp_5^{m-i+c_3} \cap \cdots \cap \wp_{s+2}^{m-i+c_s}\cap
\wp_{s+3}^m.$$ Moreover, since $M \in \wp_2^{i-c_1}\cap \wp_3^{i-c_2} \cap
\wp_5^{i-c_3}\cap \cdots \cap \wp_{s+2}^{i-c_s}$, we get
$$H_1\cdots H_t M \in \wp_2^m \cap \cdots \cap \wp_{s+3}^m=J.$$
By Remark \ref{remark23} we get
$$\reg(R/(J+\wp^m_{i_0}))\le \max\{t+i| i=0, \ldots, m-1\}.$$
We recall that
$$t=\max\{t_1, t_2\}=\max\left\{m, \left[\frac{\sum_{l=2}^{k+2} m_{l}+ k-
1}{k}\right], \left[\frac{\sum_{l=k+3}^{s+3} m_{l}+
s-k-1}{s-k}\right]\right\}.$$

\par\noindent{\it\bf Case 2.1.1:} If $t=m$, then $$\max\{t+i| i=0, \ldots, m-1\}\le 2m-1 \le T_1.$$

\par\noindent{\it\bf Case 2.1.2:} If $t=\left[\frac{\sum_{l=2}^{k+2} m_{l}+ k- 1}{k}\right]$, then
\begin{align*}
t+i&=\left[\frac{\sum_{l=2}^{k+2} m_{l}+ k- 1+ki}{k}\right]=
\left[\frac{(k+1)m-ki+\sum_{l=3}^{k+2} c_{l}+ k- 1+ki}{k}\right]\\
&\le \left[\frac{(k+1)m +i+ k- 1}{k}\right]\le \left[\frac{(k+2)m +
k-2}{k}\right]=T_k.
\end{align*}
So, $$\max\{t+i| i=0, \ldots, m-1\} \le T_k.$$

\par\noindent{\it\bf Case 2.1.3:} If $t=\left[\frac{\sum_{l=k+3}^{s+3} m_{l}+ s-k-1}{s-k}\right]$, then
\begin{align*}
t+i&=\left[\frac{\sum_{l=k+3}^{s+3} m_{l}+ s-k-1+(s-k)i}{s-k}\right]\\
&=\left[\frac{(s-k+1)m-(s-k)i+\sum_{l=k+3}^{s+2}
c_{l-2}+s-k-1+(s-k)i}{s-k}\right]\\
&= \left[\frac{(s-k+1)m+\sum_{l=k+3}^{s+2} c_{l-2} +s-k-1}{s-k}\right].
\end{align*}
Note that $\sum_{l=k+3}^{s+2} c_{l-2} \le i \le m-1$. Recall that we are
considering Case 2.1: $m=1$ or $s-k \ge k$ or $s-k \ge 2$. \par

\par\noindent{\it\bf Case 2.1.3.1:} If $m=1$, then  we have
\begin{align*}\max\{t+i| i=0, \ldots, m-1\}
&=\left[\frac{(s-k+1)1+s-k-1}{s-k}\right] =2\\
&\le \left[\frac{(s+3)1+s-2}{s}\right]=T_s.\end{align*}

\par\noindent{\it\bf Case 2.1.3.2:} If $s-k \ge k$, then we have
\begin{align*}\max\{t+i| i=0, \ldots, m-1\}
&= \left[\frac{(s-k+1)m+\sum_{l=k+3}^{s+2} c_{l-2} +s-k-1}{s-k}\right]\\
&\le \left[\frac{(s-k+1)m+m-1 +s-k-1}{s-k}\right]\\
&\le\left[\frac{(s-k+2)m+s-k-2}{s-k}\right]\\
&\le \left[\frac{(k+2)m+k-2}{k}\right]=T_k. \end{align*} The next step we need
only consider case of $s-k \ge 2$ and $m\ge 2$ and $s-k <k$.

\par\noindent{\it\bf Case 2.1.3.3:} If $s-k \ge 3$ and $m\ge 2$, then we have
\begin{align*}\max\{t+i| i=0, \ldots, m-1\} &=\left[\frac{(s-k+1)m+\sum_{l=k+3}^{s+2} c_{l-2} +s-k-1}{s-k}\right]\\
&\le\left[\frac{(s-k+2)m+s-k-2}{s-k}\right]\le 2m-1 \le T_1.\end{align*}

\par\noindent{\it\bf Case 2.1.3.4:} If $s-k=2$ and  $s-k < k$ and $m\ge 2$, then since $c_1+\ldots +c_s
=i \le m-1$, we have $\sum_{l=k+3}^{s+2} c_{l-2} \le m-1$.  We consider two
following for $\sum_{l=k+3}^{s+2} c_{l-2}$:

$\bullet$ If $\sum_{l=k+3}^{s+2} c_{l-2} \le m-2$, then we have
$$H_1\cdots H_t M \in \wp_2^m \cap \cdots \cap \wp_{s+3}^m=J$$
with $$\max\{t+i| i=0, \ldots, m-1\}= \left[\frac{3m+\sum_{l=k+3}^{s+2}
c_{l-2}+1}{2}\right] \le \left[\frac{4m-1}{2}\right] = 2m-1 \le T_1.$$

$\bullet$  If $\sum_{l=k+3}^{s+2} c_{l-2}= m-1$, then we have $c_j=0, j=1,
\ldots, s-2$, $i=m-1$. Therefore, $m_2=m_3=m_5=\cdots=m_{s}=1$ and $$M\in
\wp_2^{m-1} \cap \wp_3^{m-1} \cap \wp_5^{m-1}\cap \cdots \cap \wp_s^{m-1}\cap
\wp_{s+1}^{m-1-c_{s-1}}\cap \wp_{s+2}^{m-1-c_{s}}.$$ We recall that the $\beta$
in Case 2.1 is the linear $(s-1)$-space containing $P_4, \ldots, P_{s+3}$ and
avoiding $P_{i_0}$. Let $K_1$ be the hyperplane containing $\beta$ and avoiding
$P_{i_0}$. Then, we have
$$K_1M \in \wp_2^{m-1}\cap \wp_3^{m-1} \cap \wp_4 \cap \wp_5^m
\cdots \cap \wp_s^m \cap \wp_{s+1}^{m-c_{s-1}}\cap \wp_{s+2}^{m-c_{s}}\cap
\wp_{s+3}.$$ The linear $(s-1)$-space, say $\gamma_1$, containing $P_2, P_3,
P_4, \ldots, P_{s-1}, P_{s+1}, P_{s+3}$ avoids $P_{i_0}$ (If $P_{i_0}\in
\gamma_1$, then $\gamma_1$ contains $\alpha$. So, $\gamma_1$ is a linear
$(s-1)$-space containing $s+2$ points of $X$, a contradiction). Similarly, the
linear $(s-1)$-space, say $\gamma_2$, containing $P_2, P_3, P_4, \ldots,
P_{s-1}, P_{s+2}, P_{s+3}$ avoids $P_{i_0}$. Let $K_2$ be a hyperplane
containing $\gamma_1$ and avoiding $P_{i_0}$. Let $K_3$ be a hyperplane
containing $\gamma_2$ and avoiding $P_{i_0}$. Then, we have
$$K_3^{c_{s}}K_2^{c_{s-1}}K_1M \in \wp_2^{m}\cap \wp_3^{m} \cap \wp_4^{1+c_{s-1}+c_{s}} \cap
\wp_5^m \cap  \cdots \cap \wp_s^m \cap \wp_{s+1}^{m}\cap \wp_{s+2}^{m}\cap
\wp_{s+3}^{1+c_{s-1}+c_s}=\wp_1^m \cap \cdots \cap \wp_{s+3}^m=J$$ with
$$c_s+c_{s-1}+1+i=2m-1.$$ So, in case of $s-k=2$, $s-k < k$ and $m\ge 2$ by
Remark \ref{remark23} we get
$$\reg(R/(J+\wp^m_{i_0}))\le  2m-1= T_1.$$

\par\smallskip\noindent {\it\bf Case 2.2:}  $m\ge 2$ and $s-k=1$ and $s-k<k$. Then $\alpha$ contains
$P_1,\ldots, P_{s+1}$ and there is not any linear $(s-2)$-space containing $s$
points of $X$. We consider two following cases:

\par\smallskip\noindent {\it\bf Case 2.2.1:} There is not any linear $(s-1)$-space
containing $P_{s+2}$, $P_{s+3}$ and $s-1$ points of $X\cap \alpha$. So, in this
case every linear $(s-1)$-space containing arbitrary $s$ points of $\{P_1,
\ldots, P_{s+2}\}$ avoids $P_{s+3}$. Put $P_{i_0}=P_{s+3}=(1, 0, \ldots, 0)$,
$P_1=(0, \underset{2}{\underbrace{1}}, 0, \ldots, 0)$, ..., $P_j=(0, \ldots, 0,
\underset{j+1}{\underbrace{1}}, 0, \ldots, 0)$, $j=1, \ldots, s$. For every
monomial $M=X_1^{c_1}\cdots X_{n}^{c_n}$, $c_1+\cdots +c_n=i$, $i=0, \ldots,
m-1$. Put  $m_j=m-i+c_{j}$, $j=1, \ldots, s$, $m_{s+1}=m_{s+2}=m$. Put
$$t=\max\left\{m_j, \left[\frac{\sum_{l=1}^{s+2} m_{l}+ s- 1}{s}\right]|\ j=1,
\ldots s+2 \right\}.$$ By Lemma \ref{lem41} we can find $t$ linear $(s-1)$-space
avoiding $P_{i_0}$, say $L_1, \ldots, L_{t}$, such that  for every point $P_j\in
\{P_1, \ldots, P_{s+2}\}$, there are $m_j$ linear $(s-1)$-spaces (including
multiplicity) of $\{L_1, \ldots, L_{t}\}$ passing through the $P_j$. \par

For $j=1, \ldots, t$, let $H_j$ be a hyperplane containing $L_j$ and avoiding
$P_{i_0}$. Then we have
$$H_1 \cdots H_t \in \wp_1^{m_1}\cap  \cdots \cap \wp_{s+2}^{m_{s+2}}=\wp_1^{m-i+c_1}\cap \cdots \cap \wp_s^{m-i+c_s}
\cap \wp_{s+1}^m \cap  \wp_{s+2}^m.$$ Moreover, since $M\in \wp_1^{i-c_1} \cap
\cdots \cap \wp_{s}^{i-c_{s}}$, we have
$$H_1 \cdots H_t M \in \wp_1^m \cap \cdots \cap \wp_{s+2}^m=J.$$
By Remark \ref{remark23} we get
$$\reg(R/(J+\wp^m_{i_0}))\le \max\{ t+i| i=1, \ldots, m-1\}.$$
If $t=m$, then
$$\max\{ t+i| i=1, \ldots, m-1\} \le 2m-1 =T_1.$$
If $t=\left[\frac{\sum_{l=1}^{s+2} m_{l}+ s- 1}{s}\right]$, then
$$\max\{ t+i| i=1, \ldots, m-1\}\le \left[\frac{\sum_{l=1}^{s+3} m_{l}+ s- 2}{s}\right]=T_s.$$

\par\smallskip\noindent {\it\bf Case 2.2.2:} There is a linear $(s-1)$-space, say $\beta$,
containing $P_{s+2}$, $P_{s+3}$ and $s-1$ points of $X\cap \alpha$. We may
assume that $P_3, \ldots, P_{s+2}\in \beta$. Put $P_{i_0}=P_{s+3}$ and
$J=\wp_1^m \cap \cdots \cap \wp_{s+2}^m$. If $s=2$, then $P_3, P_4, P_5$ lie on
the line $\beta$ and $P_1, P_2 \notin \beta$. Let $Q_1$ be the linear
$(n-1)$-space passing throught $P_3, P_1$ and avoiding $P_5$. Let $Q_2$ be the
linear $(n-1)$-space passing throught $P_4, P_2$ and avoiding $P_5$. Then
$$Q_1^m Q_2^m M\in J$$
for every monomial $M=X_1^{c_1}\cdots X_{n}^{c_n}$, $c_1+\cdots +c_n=i$, $i=0,
\ldots, m-1$. By Remark \ref{remark23} we get
$$\reg(R/(J+\wp^m_{i_0}))\le \max\{2m +i |i=1, \ldots, m-1\} \le 3m-1= T_1.$$
If $s\ge 3$, by Lemma \ref{lem42} we get
$$\reg(R/(J+\wp_{i_0}^m))\le \max\{T_j|j=1,\ldots,n\}.$$

\par The proof of Proposition \ref{prop43} is completed.

\end{proof}

The following proposition gives a sharp upper bound for the regularity index of
$s+3$ equimultiple fat points not on a linear $(s-1)$-space.

\par\smallskip \begin{theorem}\label{thm44} Let $X=\{P_1, \ldots, P_{s+3}\}$
be a set of distinct points not on a linear $(s-1)$-space in $\mathbb P^n$, $s
\le n$, and $m$ be a positive integer. Let
$$Z=mP_1+\cdots+mP_{s+3}$$
be the equimultiple fat points. Then,
$$\reg(Z)\le \max\{T_j|\ j=1,\ldots,n\},$$
where
$$T_j = \max\left\{\left[\frac{mq+ j- 2}{j}\right] |\
P_{i_1}, \ldots , P_{i_q} \text{ lie on a linear }j\text{-space}\right\}.$$
\end{theorem}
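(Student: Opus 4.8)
The plan is to prove Theorem \ref{thm44} by induction on the total multiplicity, using Lemma \ref{lem21} to peel off one fat point at a time and reduce the regularity index of $Z$ to the regularity indices of smaller configurations together with the "collision" term $\reg(R/(J+\wp_{i_0}^m))$. The overall strategy splits naturally according to how degenerate the configuration $X$ is on its spanning space.

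First I would reduce to the case where $X$ actually lies on a linear $s$-space: since $X$ is not contained in any linear $(s-1)$-space, it spans some linear $r$-space with $r \ge s$, and by choosing coordinates I may assume the relevant span is exactly the $s$-space in which the degeneracy is concentrated. If $X$ is in general position on this $s$-space, then I can invoke the known Catalisano--Trung--Valla / Segre bound for points in general position (with $n$ replaced by $s$), which gives $\reg(Z) \le \max\{m_1+m_2-1, [(\sum m_i + s-2)/s]\} = \max\{2m-1, [((s+3)m+s-2)/s]\} = \max\{T_1, T_s\}$, already of the desired form. So the substance is the \emph{non-general-position} case, and this is exactly where Proposition \ref{prop43} is engineered to apply.

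In that case I would apply Lemma \ref{lem21} with the point $P_{i_0}$ supplied by Proposition \ref{prop43}: writing $I = \wp_1^m \cap \cdots \cap \wp_{s+3}^m$, $J = \cap_{i\ne i_0}\wp_i^m$, and $a=m$, the lemma yields
\[
\reg(R/I) = \max\bigl\{\, m-1,\ \reg(R/J),\ \reg(R/(J+\wp_{i_0}^m))\,\bigr\}.
\]
The middle term $\reg(R/J)$ is the regularity index of the $s+2$ remaining fat points; since these do not lie on a linear $(s-1)$-space (or, if they do drop in dimension, one applies the corresponding lower-dimensional hypothesis), Lemma \ref{lem28} controls it by the Segre bound for $s+2$ points, whose value is again of the form $\max_j T_j'$ and is dominated by $\max_j T_j$ for the full configuration. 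The third term is bounded by $\max\{T_j \mid j=1,\ldots,n\}$ directly by Proposition \ref{prop43}, and $m-1 < T_1$ trivially. Combining the three bounds gives $\reg(Z)=\reg(R/I)\le \max\{T_j\mid j=1,\ldots,n\}$.

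The main obstacle I anticipate is \textbf{bookkeeping of the $T_j$ values across the passage from $s+3$ points to $s+2$ points}: removing $P_{i_0}$ can only shrink or preserve each $j$-space occupancy count, so each $T_j$ for the sub-configuration is at most the corresponding $T_j$ for $X$, but verifying this cleanly — especially checking that the deleted point does not create a configuration forcing a larger $T_j$ on a lower-dimensional flat — requires care. A secondary subtlety is ensuring that the $s+2$ remaining points genuinely satisfy the hypotheses of Lemma \ref{lem28} (i.e. that they are not forced onto a linear $(s-1)$-space after deletion); if they happen to degenerate, one must recurse via the inductive hypothesis on fewer points rather than applying Lemma \ref{lem28} directly. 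Once these containments of the $T_j$ indices are pinned down, the three-way maximum from Lemma \ref{lem21} closes the argument.
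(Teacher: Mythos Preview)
Your approach is essentially the paper's: split off the point $P_{i_0}$ supplied by Proposition~\ref{prop43}, bound the collision term $\reg(R/(J+\wp_{i_0}^m))$ by that proposition, bound $\reg(R/J)$ by the $(s+2)$-point result, and combine via Lemma~\ref{lem21}. Two small clarifications: the induction is on $s$, not on total multiplicity --- removing $P_{i_0}$ leaves $(s-1)+3$ equimultiple points not on a linear $(s-2)$-space, so the inductive hypothesis for $s-1$ applies directly and you never need to worry about whether the span drops enough for Lemma~\ref{lem28}; and your explicit treatment of the general-position and larger-span cases (via Catalisano--Trung--Valla and Lemma~\ref{lem28} respectively) fills in a reduction that the paper's written proof tacitly assumes before it can legitimately invoke Proposition~\ref{prop43}.
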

\begin{proof} Let
$\wp_1, \ldots, \wp_{s+3}$ be the homogeneous prime ideals of the polynomial
ring $R=K[X_0, \ldots, X_n]$ corresponding to the points $P_1, \ldots, P_{s+3}$.
Put $I=\wp_1^m \cap \cdots \cap \wp_{s+3}^m$. We have $\reg(Z)=\reg(R/I)$.

We argue by induction on $s$. For $s=1$, the theorem is true  by Lemma
\ref{lem28}. We assume that the theorem is true for $s-1$. By Proposition
\ref{prop43}, there exists a points $P_{i_0} \in X$ such that
\begin{align}\reg(R/(J+\wp_{i_0}^m))\le \max\{T_j|j=1,\ldots,n\}, \end{align} where
$$J=\underset{i\ne i_0}{\cap}\wp_i^m.$$
Put $Y=X\setminus \{P_{i_0}\}$. Since $X$ does not lie on a linear
$(s-1)$-space, we have $Y$ does not lie on a linear $(s-2)$-space. Put
$$T'_j=\max\left\{ \left[ \frac{m_{H\setminus \{P_{i_0}\}}+j-2}{j} | H \text{ is a linear }j\text{-space}   \right]\right\}$$
with $$m_{H \setminus \{ P_{i_0}\}} = \sum_{P_i \in H \setminus \{ P_{i_0}\}}
m_i,\  m_i=m.$$ By inductive assumption, we get
$$\reg(R/J) \le \max\left\{T'_j | j+1, \ldots, n \right\}.$$
We have $T'_j \le T_j$, $j=1, \ldots, n$. Thus, \begin{align}\reg(R/J) \le
\max\{T_j|j=1,\ldots,n\}. \end{align} By Lemma \ref{lem21} we have
\begin{align}\reg(R/I)=\max\left\{m-1, \reg(R/J),
\reg(R/(J+\wp_{i_0}^m))\right\}.\end{align} Therefore, from $(5)$, $(6)$ and
$(7)$ we get
$$\reg(R/I)\le \max\{T_j|j=1,\ldots,n\}.$$

\par The proof of Theorem \ref{thm44} is completed.
\end{proof}

\bigskip

\noindent Phan Van Thien \\ Department of
Mathematics, Hue Normal University, Vietnam \\
Email: tphanvannl@yahoo.com\\

\bigskip

\noindent Ho Thi Doan Trang \\ Department of
Mathematics, Hue Normal University, Vietnam\\
Email: hothidoantrang82@yahoo.com

\end{document}